\documentclass[11pt]{article}

\usepackage{amsmath,amssymb,amsthm,mathtools}
\usepackage{hyperref}
\usepackage{enumitem}
\usepackage{geometry}
\newtheorem{theorem}{Theorem}
\newtheorem{proposition}{Proposition}
\newtheorem{lemma}{Lemma}
\newtheorem{corollary}{Corollary}
\newtheorem{example}{Example}
\newtheorem{remark}{Remark}

\title{Beyond Bass Collapse: New Irregular Edge-Space\\ Invariants in Ihara Theory}

\author{Hartosh Singh Bal}
\date{}

\begin{document}
\maketitle

\begin{abstract}
Let \(G\) be a finite simple graph and let \(T\) be its Hashimoto operator on the directed-edge space. We show that edge reversal induces a canonical symmetric/antisymmetric splitting under which \(T\) acquires an explicit \(2\times 2\) block form. The diagonal blocks are \(\tfrac12 L(G)\) and \(-\tfrac12 A(G)\), where \(L(G)\) is the line-graph adjacency and \(A(G)\) is the antisymmetric line-graph adjacency, while the off-diagonal block is the mixed incidence product \(M=|D|^\top D\). This identifies the ordinary and antisymmetric line-graph sectors as the two canonical diagonal sectors of Hashimoto theory and isolates a mixed sector linking them.

A Schur-complement argument then gives a factorization
\[
\det(I-wT)=\det\!\bigl(I-\tfrac w2 L(G)\bigr)\,C_G(w),
\]
where \(C_G(w)\) is an explicit correction determinant built from the antisymmetric and mixed sectors. We show that the trivial roots \(w=\pm1\) localize on canonical edge subspaces, and that for line-graph-cospectral pairs all remaining Ihara separation is forced into the correction sector.

Although the raw mixed block \(M\) depends on edge orientation, its natural gauge-invariant shadows, including \(MM^\top\), \(M^\top M\), and \(M^\top L^kM\), define a canonical matrix package attached to the graph. In the regular case these collapse to adjacency-side data, but in the irregular case they need not. As an application, we exhibit irregular non-isomorphic graphs that are adjacency-cospectral and line-graph-cospectral yet are separated by the correction sector, and we find further examples where the gauge-invariant mixed shadows separate even when the scalar Ihara polynomial does not. This isolates new irregular edge-space invariants in Hashimoto--Ihara theory.
\end{abstract}

\medskip
\noindent\textbf{2020 Mathematics Subject Classification.}
05C50, 05C22, 11M41.

\noindent\textbf{Keywords.}
Ihara zeta function, Hashimoto operator, non-backtracking matrix, line graph, signed graph, antisymmetric line graph, graph spectra, cospectral graphs.

\bigskip

\section{Introduction}

The Ihara zeta function of a finite graph $G$ is a generating function for primitive non-backtracking cycles, and admits the determinant representation \cite{Ihara1966,Hashimoto1989}
\[
\zeta_G(w)^{-1}=\det(I-wT),
\]
where $T$ is the Hashimoto non-backtracking operator on the $2m$ directed edges \cite{Hashimoto1989}. Bass' formula rewrites the same determinant in vertex space \cite{Bass1992}:
\[
\det(I-wT)=(1-w^2)^{m-n}\det(I-wA_G+w^2(D_G-I)),
\]
where $n=|V|$, $m=|E|$, $A_G$ is the adjacency matrix, and $D_G$ is the diagonal degree matrix. In particular, in the regular regime the scalar Ihara theory collapses to ordinary adjacency data.

The Ihara zeta function, its determinant formulas, and their relationship to graph coverings form a classical part of algebraic graph theory; for general background from this perspective see, for example, Terras \cite{TerrasBook}, Stark--Terras \cite{StarkTerras1996}, and Kotani--Sunada \cite{KotaniSunada2000}.

This scalar collapse leaves open a natural edge-space question: what structure, if any, survives on the directed-edge state space before one passes to the final determinant? Put differently, once the non-backtracking dynamics is viewed on oriented edges rather than in vertex space, where does genuinely new graph-theoretic information live, and how much of it is lost in the Bass reduction?

In parallel with this question, one may study canonical edge-based graph structures attached to \(G\). In \cite{BalHL2} we introduced a canonical graph \(\mathrm{HL}'_2(G)\) on the \(2m\) directed edges recording shared-head and shared-tail adjacency. In \cite{BalALG} we identified the antisymmetric \((-1)\)-sector of the edge-reversal involution as a canonical signed refinement of the line graph, denoted \(\mathcal A(G)\). The signed adjacency \(D^{\mathsf T}D-2I\) itself belongs to the classical signed-line-graph literature; see, for instance, Zaslavsky’s survey and bibliography \cite{ZaslavskySGG,ZaslavskyBiblio}. What is new in the present setting is that the relevant switching class is canonically attached to the underlying graph and is forced by the edge-reversal symmetry of the directed-edge space. These constructions suggest that the non-backtracking state space carries more internal structure than is visible from the scalar Ihara determinant alone.

The present paper makes this precise. The key observation is that the directed-edge space on which \(T\) acts is exactly the space underlying \(\mathrm{HL}'_2(G)\), and that edge reversal furnishes a canonical \(\mathbb Z_2\)-symmetry of this space. Resolving that symmetry yields four layers of structure:

\begin{itemize}[leftmargin=2.2em]
\item a canonical sector decomposition of the Hashimoto operator in which the two diagonal sectors are exactly \(\frac12 L(G)\) and \(-\frac12\mathcal A(G)\),
\item a Schur-complement factorization of \(\det(I-wT)\) into an ordinary line-graph factor and an explicit correction factor,
\item a mixed transport block \(|D|^\top D\) coupling the two sectors, together with its gauge-invariant mixed shadows \(MM^\top\), \(M^\top M\), and \(M^\top L^kM\),
\item structural consequences of this factorization, including localization of the trivial roots on canonical edge subspaces and a resolution principle showing that, for line-graph-cospectral pairs, all remaining Ihara separation is forced into the correction sector.
\end{itemize}

The resulting picture is a hierarchy of edge-space information beyond the classical Bass collapse. The raw mixed block $M:=|D|^\top D$ is a gauge-dependent transport representative; its natural graph invariants are the gauge-invariant mixed shadows built from it; and the scalar correction determinant is a further shadow of this matrix package. In the regular regime the primary mixed shadows collapse back to adjacency-side data, but in the irregular regime this collapse fails. The main theme of the paper is that this failure produces genuinely new irregular edge-space invariants in Ihara theory.

Our first application is a pair of non-isomorphic graphs that are simultaneously adjacency-cospectral and line-graph-cospectral but have distinct Ihara zeta functions, with first divergence forced entirely into the correction sector. We then show that this is not an isolated phenomenon: small-census irregular examples already exhibit gauge-invariant mixed-shadow separation, including a control pair in which the mixed-shadow package separates while the scalar Hashimoto determinant does not. Thus the paper identifies a new layer of irregular graph invariants beyond the ordinary line graph, beyond the antisymmetric line graph, and in part beyond the scalar Ihara polynomial itself.

\section{Directed edges, edge-reversal, and the symmetric lift}

\subsection{Directed-edge space and Hashimoto operator}

Let $G=(V,E)$ be a finite simple graph. Let $\vec E$ denote the set of directed edges:
for each $\{u,v\}\in E$ we have two directed edges $(u,v)$ and $(v,u)$. Thus $|\vec E|=2m$.

Index $\mathbb R^{\vec E}$ by directed edges. The Hashimoto operator $T$ is defined by
\[
T_{(u,v),(x,y)}=
\begin{cases}
1 &\text{if } v=x \text{ and } y\neq u,\\
0 &\text{otherwise.}
\end{cases}
\]
Equivalently, $T$ encodes one-step non-backtracking moves $(u,v)\mapsto(v,y)$ with $y\neq u$.

\subsection{Edge reversal}

Define the involution $\iota:\vec E\to\vec E$ by $\iota(u,v)=(v,u)$ and let $P$ be its permutation matrix:
\[
P e_{(u,v)} = e_{(v,u)},\qquad P^2=I.
\]
Hence $\mathbb R^{\vec E}$ decomposes canonically as
\[
\mathbb R^{\vec E}=E_+\oplus E_-,
\qquad
E_\pm = \{x\in\mathbb R^{\vec E}:Px=\pm x\}.
\]

\subsection{The symmetric lift $\mathrm{HL}'_2(G)$ and the identity $A=PT+TP$}

Following \cite{BalHL2}, define $\mathrm{HL}'_2(G)$ as the graph on vertex set $\vec E$ in which two directed edges are adjacent iff they share a tail or share a head, and are distinct. Let $A$ denote its adjacency matrix.

\begin{proposition}[State-space alignment]\label{prop:APTTP}
With $A,T,P$ as above,
\[
A = PT + TP.
\]
Consequently, using $P^2=I$ and the identity $T = PT^\top P$ for undirected graphs, one has
\[
PA = T+T^\top.
\]
\end{proposition}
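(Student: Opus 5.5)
The plan is to verify the identity $A=PT+TP$ entrywise, by reading off what each product does to a pair of directed edges, and then to deduce $PA=T+T^\top$ by pure matrix algebra from $P^2=I$ and the reversal symmetry of $T$.

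\textbf{Step 1: the two products.} Fix $e=(u,v)$ and $f=(x,y)$ in $\vec E$. Since $P$ is the permutation matrix of $\iota$, we have $(PT)_{e,f}=T_{\iota e,f}=T_{(v,u),(x,y)}$. By the definition of $T$ this equals $1$ exactly when $x=u$ and $y\neq v$. In words, $f$ has the same tail $u$ as $e$ and $f\neq e$.

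Similarly, $(TP)_{e,f}=T_{e,\iota f}=T_{(u,v),(y,x)}$. This equals $1$ exactly when $y=v$ and $x\neq u$, i.e.\ $f$ has the same head $v$ as $e$ and $f\neq e$.

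\textbf{Step 2: disjointness and comparison with $A$.} The key check is that the supports of $PT$ and $TP$ are disjoint, so that their sum is a $0/1$ matrix. If $f$ shared both the tail and the head of $e$, then $f=e$, since $G$ is simple and a directed edge is determined by its endpoints. This case is excluded in both products.

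Hence $(PT+TP)_{e,f}=1$ precisely when $f\neq e$ and $f$ shares a tail or a head with $e$. This is exactly the adjacency rule of $\mathrm{HL}'_2(G)$, so $A=PT+TP$. The diagonal vanishes on both sides, and the reversed edge $\iota e=(v,u)$ is correctly non-adjacent to $e$: its tail $v$ differs from $u$ and its head $u$ differs from $v$. This bookkeeping is the only place where care is needed, and I expect no real obstacle here.

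\textbf{Step 3: the symmetry $T=PT^\top P$.} For the second identity, I first verify this symmetry entrywise:
\[
(PT^\top P)_{e,f}=(T^\top)_{\iota e,\iota f}=T_{\iota f,\iota e}=T_{(y,x),(v,u)}.
\]
This equals $1$ iff $x=v$ and $u\neq y$, which is the same condition as $T_{e,f}=1$. Conjugating by $P$ and using $P^2=I$ gives the equivalent form $PTP=T^\top$.

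\textbf{Step 4: conclusion.} Multiplying $A=PT+TP$ on the left by $P$ gives
\[
PA=P^2T+PTP=T+T^\top,
\]
which is the claimed identity.
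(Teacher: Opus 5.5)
Your proof is correct and follows the paper's argument. You identify $PT$ and $TP$ entrywise with the shared-tail and shared-head relations, note these are disjoint, and then obtain $PA=T+PTP=T+T^\top$ from the reversal symmetry $T=PT^\top P$. Your bookkeeping in Step 1 is actually cleaner than the paper's: for $(PT)_{e,f}$ the excluded case is $f=e$, exactly as you state, not $f=e^{-1}$ as the paper's text says.
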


\begin{proof}
Fix directed edges $e=(u,v)$ and $f=(x,y)$.

First consider $(TP)_{e,f}=T_{e,f^{-1}}$. This equals $1$ iff
\[
\text{head}(e)=\text{tail}(f^{-1}) \iff v = y,
\]
and the non-backtracking condition for $T_{(u,v),(y,x)}$ is $x\neq u$, i.e. $f\neq e$. Thus $(TP)_{e,f}=1$ exactly when $e$ and $f$ share their head and are distinct, which is exactly one of the two adjacency conditions in $\mathrm{HL}'_2(G)$.

Next consider $(PT)_{e,f}=T_{e^{-1},f}=T_{(v,u),(x,y)}$. This equals $1$ iff
\[
\text{head}(e^{-1})=\text{tail}(f)\iff u=x,
\]
and the non-backtracking condition for $T_{(v,u),(u,y)}$ is $y\neq v$, i.e. $f\neq e^{-1}$. But since $\mathrm{HL}'_2(G)$ disallows equality of vertices and $f=e^{-1}$ would share both head and tail with $e$, the adjacency criterion is precisely: share tail and be distinct. Hence $(PT)_{e,f}=1$ exactly when $e$ and $f$ share their tail and are distinct.

Since adjacency in $\mathrm{HL}'_2(G)$ is the union of these disjoint conditions, we have $A=PT+TP$.

For the second identity, on undirected graphs one checks $T_{e,f}=T_{f^{-1},e^{-1}}$, which is the matrix relation $T=PT^\top P$. We record the standard symmetry $T = PT^\top P$ for undirected graphs: indeed,
$T_{e,f}=1$ iff $\mathrm{head}(e)=\mathrm{tail}(f)$ and $f\neq e^{-1}$, which is equivalent to
$T_{f^{-1},e^{-1}}=1$. In matrix form this reads $T = PT^\top P$. Multiply $A=PT+TP$ on the left by $P$:
\[
PA = P(PT+TP) = T + PTP = T + T^\top,
\]
using $PTP = (PT^\top P)^\top = T^\top$.  \end{proof}

\section{Incidence matrices and the line graph / antisymmetric line graph}

Fix a reference orientation of the undirected edges $E$ (one direction per edge). Write an undirected edge as $e=\{a,b\}$ and its chosen orientation as $a\to b$.

Define the oriented incidence matrix $D\in\mathbb R^{V\times E}$ by
\[
D_{v,e}=
\begin{cases}
+1 &\text{if } v \text{ is the head of } e,\\
-1 &\text{if } v \text{ is the tail of } e,\\
0 &\text{otherwise.}
\end{cases}
\]
Let $|D|$ denote the entrywise absolute value (unsigned incidence).

\begin{lemma}[Line graph and antisymmetric line graph from incidence]\label{lem:LSfrominc}
Let $I$ denote the $m\times m$ identity. Then
\[
L := |D|^\top|D| - 2I
\]
is the adjacency matrix of the line graph $L(G)$ (in the edge-indexing of $E$), and
\[
S := D^\top D - 2I
\]
is the signed adjacency matrix of the antisymmetric line graph $\mathcal A(G)$ in the same edge-indexing. In particular, $S$ is well-defined up to switching.
\end{lemma}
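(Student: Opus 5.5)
The plan is a direct entrywise computation of the two Gram matrices $|D|^\top|D|$ and $D^\top D$, followed by a check of how $S$ changes when the reference orientation changes.

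\textbf{Diagonal entries.} For $e,f\in E$,
\[
(|D|^\top|D|)_{e,f}=\sum_{v}|D_{v,e}|\,|D_{v,f}|,
\qquad
(D^\top D)_{e,f}=\sum_v D_{v,e}D_{v,f}.
\]
When $e=f$, each column of $D$ has exactly two nonzero entries, $\pm1$. So both diagonal entries equal $2$, and subtracting $2I$ leaves zero diagonals in $L$ and $S$. This is consistent with loopless (signed) adjacency matrices.

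\textbf{Off-diagonal entries.} For $e\neq f$, simplicity of $G$ means $e$ and $f$ share at most one endpoint. So each sum has at most one nonzero term, namely at the common vertex $v$ if there is one.

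\emph{Unsigned case.} The nonzero term is $1$, so $L_{e,f}=1$ exactly when $e,f$ are incident in $G$. That is precisely adjacency in $L(G)$.

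\emph{Signed case.} The term is $D_{v,e}D_{v,f}\in\{\pm1\}$. It equals $+1$ when $v$ is the head of both or the tail of both, and $-1$ when $v$ is the head of one and the tail of the other.

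\textbf{Matching with $\mathcal A(G)$.} Next I would compare these signs with the definition of $\mathcal A(G)$ from \cite{BalALG}, namely as the $(-1)$-sector of the edge-reversal involution $P$ on $\mathbb R^{\vec E}$.

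The concrete route is to take the basis $\epsilon_e=e_{(a,b)}-e_{(b,a)}$ of $E_-$, indexed by the chosen orientations $a\to b$. I would then compute the compression of the $\mathrm{HL}'_2(G)$ adjacency $A$ (equivalently of $PA=T+T^\top$ via Proposition~\ref{prop:APTTP}) to $E_-$ in this basis. A short check of the four directed-edge pairs over a shared vertex should give that this compression is $2S$ up to an overall sign fixed by the normalization in \cite{BalALG}:
\begin{itemize}[leftmargin=2.2em]
\item same-head and same-tail pairs contribute with one sign;
\item head-to-tail pairs contribute with the opposite sign.
\end{itemize}
This matches the sign pattern computed above. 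I expect this identification to be the main obstacle, and only because of convention bookkeeping: the factor $\tfrac12$ and the global sign have to agree with the normalization used in \cite{BalALG}. If the definition there is simply ``the signed line graph with sign $D_{v,e}D_{v,f}$,'' this step is immediate.

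\textbf{Well-definedness up to switching.} Reversing the reference orientation of a set $F\subseteq E$ replaces $D$ by $D\Sigma$, where $\Sigma$ is diagonal with $\Sigma_{e,e}=-1$ for $e\in F$ and $+1$ otherwise. Then
\[
D^\top D\;\mapsto\;\Sigma D^\top D\Sigma,
\qquad
S\;\mapsto\;\Sigma S\Sigma,
\]
since $\Sigma I\Sigma=I$. This is exactly a switching at $F$. Meanwhile $|D|$, and hence $L$, is unchanged. So $S$ is well-defined up to switching, while $L$ is independent of the orientation.
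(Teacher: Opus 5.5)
Your proposal is correct and matches the paper's proof: both evaluate the Gram matrices $|D|^\top|D|$ and $D^\top D$ entrywise (diagonal $2$; distinct edges share at most one vertex $v$, giving $1$ or $D_{v,e}D_{v,f}$), and both obtain switching from $D\mapsto D\Sigma$, which gives $S\mapsto \Sigma S\Sigma$ and leaves $L$ unchanged. The paper leaves the identification with $\mathcal A(G)$ implicit, treating the sign pattern as the definition; your additional compression check also works, and with no sign ambiguity: in the basis $\epsilon_e=\mathbf e_{e^+}-\mathbf e_{e^-}$ the compression of the $\mathrm{HL}'_2(G)$ adjacency $A$ to $E_-$ is exactly $2S$, so $A$ acts as $S$ in the orthonormal basis, while $PA=T+T^\top$ acts as $-S$ there, consistent with $T_{--}=-\tfrac12 S$.
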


\begin{proof}
For distinct edges $e\neq f$, the entry $(|D|^\top|D|)_{e,f}$ counts the number of vertices incident to both $e$ and $f$. In a simple graph, two distinct edges share either $0$ or $1$ vertices, hence $(|D|^\top|D|)_{e,f}=1$ iff $e,f$ are adjacent in $L(G)$. Diagonal entries satisfy $(|D|^\top|D|)_{e,e}=2$, so subtracting $2I$ yields the line graph adjacency.

Similarly, for distinct edges $e\neq f$ sharing a vertex $v$, we have $D_{v,e},D_{v,f}\in\{\pm1\}$ and
\[
(D^\top D)_{e,f}=\sum_{v\in V} D_{v,e}D_{v,f}=D_{v,e}D_{v,f}\in\{\pm1\}.
\]
This sign equals $+1$ when $e$ and $f$ are both oriented into $v$ or both oriented out of $v$, and equals $-1$ when exactly one is oriented into $v$. Thus $D^\top D-2I$ is a signed adjacency on $E$ with underlying graph $L(G)$. Switching corresponds to reversing the reference orientation of an edge, which multiplies the corresponding column of $D$ by $-1$ and thus conjugates $S$ by a diagonal $\{\pm1\}$-matrix.  \end{proof}

\section{Sector decomposition of the Hashimoto operator}

We now prove the main structural theorem: in the $(\pm)$-eigenbasis of $P$, the Hashimoto operator has explicit blocks involving $L,S$, and the mixed incidence product $|D|^\top D$.

\subsection{The $(\pm)$-basis on directed edges}

Index undirected edges by $E$. For each undirected edge $e=\{a,b\}$, let the two directed edges be $e^+=(a,b)$ and $e^-=(b,a)$ (these depend on the chosen reference orientation; we fix one for the proof).

Define vectors in $\mathbb R^{\vec E}$:
\[
\phi_e^+ := \frac{1}{\sqrt2}\bigl( \mathbf e_{e^+}+\mathbf e_{e^-}\bigr),
\qquad
\phi_e^- := \frac{1}{\sqrt2}\bigl( \mathbf e_{e^+}-\mathbf e_{e^-}\bigr),
\]
where $\mathbf e_{(u,v)}$ is the standard basis vector of $\mathbb R^{\vec E}$.

Then $P\phi_e^\pm=\pm\phi_e^\pm$, so $\{\phi_e^+\}_{e\in E}$ is an orthonormal basis of $E_+$ and $\{\phi_e^-\}_{e\in E}$ is an orthonormal basis of $E_-$.

Let $Q$ denote the orthogonal change-of-basis matrix sending the ordered basis
\[
(\mathbf e_{e^+},\mathbf e_{e^-})_{e\in E}
\quad\mapsto\quad
(\phi_e^+,\phi_e^-)_{e\in E}.
\]
Then $Q^\top T Q$ is the block matrix of $T$ in the $E_+\oplus E_-$ decomposition.

\subsection{Entrywise computation of blocks}

\begin{theorem}[Sector decomposition]\label{thm:sector}
With $L,S$ as in Lemma~\ref{lem:LSfrominc}, one has
\[
Q^\top T Q
=
\begin{pmatrix}
\frac12 L & -\frac12\,|D|^\top D\\[4pt]
\frac12\,D^\top|D| & -\frac12 S
\end{pmatrix}.
\]
Equivalently, the restriction of $T$ to the symmetric sector is $\frac12 L$, the restriction to the antisymmetric sector is $-\frac12 S$, and the cross-coupling is the mixed incidence product.
\end{theorem}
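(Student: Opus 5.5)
The plan is a direct entrywise computation. The columns of $Q$ are the orthonormal vectors $\phi_e^\pm$, so the $((e,s),(f,t))$ entry of $Q^\top TQ$ is $(\phi_e^s)^\top T\,\phi_f^t$ for $s,t\in\{+,-\}$. Expanding gives
\[
(\phi_e^s)^\top T\,\phi_f^t=\tfrac12\sum_{\alpha,\beta\in\{+,-\}} s^{[\alpha]}\,t^{[\beta]}\,T_{e^\alpha,f^\beta},
\]
where $s^{[\alpha]}=1$ unless $s=-$ and $\alpha=-$, in which case it equals $-1$ (and similarly for $t^{[\beta]}$). The whole proof therefore reduces to knowing, for each pair of undirected edges $e,f$, which of the four entries $T_{e^\alpha,f^\beta}$ are nonzero, and with which signs they contribute. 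I would then match the result against $L$, $S$, $|D|^\top D$ and $D^\top|D|$, written entrywise via Lemma~\ref{lem:LSfrominc}.

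First I would dispose of the cases that give zero.
\begin{itemize}[leftmargin=2.2em]
\item If $e=f$, then $T_{e^\alpha,e^\alpha}=0$, since $G$ has no loops and so the head of $e^\alpha$ is not its tail. Also $T_{e^\alpha,e^{-\alpha}}=0$ by the non-backtracking condition. So all four diagonal blocks have zero diagonal.
\item This is consistent with the right-hand side. $L$ and $S$ have zero diagonal by construction. The diagonal of $|D|^\top D$ is $\sum_v |D_{v,e}|D_{v,e}=(+1)+(-1)=0$, and the same holds for $D^\top|D|$.
\item If $e\neq f$ share no vertex, every $T_{e^\alpha,f^\beta}$ vanishes, and so do all the incidence products.
\end{itemize}

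The substantive case is $e\neq f$ sharing a single vertex $v$; this is unique since $G$ is simple. Then exactly one of the four entries is nonzero: the one in which $e^\alpha$ has head $v$ and $f^\beta$ has tail $v$. The non-backtracking condition is automatic because $f\neq e$. I would identify this pair $(\alpha,\beta)$ through the incidence signs.
\begin{itemize}[leftmargin=2.2em]
\item $e^+$ has head $v$ exactly when $D_{v,e}=+1$, so $\alpha$ corresponds to the sign $D_{v,e}$.
\item $f^+$ has tail $v$ exactly when $D_{v,f}=-1$, so $\beta$ corresponds to $-D_{v,f}$.
\end{itemize}
Substituting into the expansion gives the four blocks:
\begin{itemize}[leftmargin=2.2em]
\item $(+,+)$: $\tfrac12=\tfrac12 L_{ef}$.
\item $(-,-)$: $\tfrac12 D_{v,e}(-D_{v,f})=-\tfrac12 S_{ef}$.
\item $(+,-)$: $\tfrac12(-D_{v,f})=-\tfrac12|D_{v,e}|D_{v,f}=-\tfrac12(|D|^\top D)_{ef}$.
\item $(-,+)$: $\tfrac12 D_{v,e}=\tfrac12(D^\top|D|)_{ef}$.
\end{itemize}
In each case the incidence product collapses to the single term at $v$, because $v$ is the only common vertex of $e$ and $f$.

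I expect the main obstacle to be sign bookkeeping rather than any conceptual step. Three things must be kept consistent: the translation from ``$e^\alpha$ has head $v$'' to $\alpha=D_{v,e}$, with the opposite sign convention for tails; the row/column convention of $Q^\top TQ$, so that the $(+,-)$ block receives $-\tfrac12|D|^\top D$ and not its transpose; and the diagonal cancellations. I would check all of these once on a single path $a\to b\to c$ before writing the general argument.
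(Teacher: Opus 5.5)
Your proposal is correct and matches the paper's proof: the same four-term expansion of $\langle\phi_e^{s},T\phi_f^{t}\rangle$, the same observation that for adjacent $e\neq f$ exactly one directed pair contributes, and the same collapse of each incidence product to the single shared vertex. Encoding the contributing pair by the signs $D_{v,e}$ and $-D_{v,f}$ is a compact form of the paper's Case A--D table, and your explicit check that $|D|^\top D$ and $D^\top|D|$ have zero diagonal fills in a small point the paper leaves implicit.
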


\begin{proof}
Fix a reference orientation of each undirected edge and write the two directed realizations
of an undirected edge $e$ as $e^+$ (the reference direction) and $e^-=(e^+)^{-1}$.
Define the orthonormal $(\pm)$-basis vectors
\[
\phi_e^+ := \frac{1}{\sqrt2}\bigl(\mathbf e_{e^+}+\mathbf e_{e^-}\bigr),
\qquad
\phi_e^- := \frac{1}{\sqrt2}\bigl(\mathbf e_{e^+}-\mathbf e_{e^-}\bigr),
\]
so that $P\phi_e^\pm=\pm\phi_e^\pm$.

For $\alpha,\beta\in\{+,-\}$ define block entries
\[
T_{\alpha\beta}(e,f):=\langle \phi_e^\alpha,\,T\phi_f^\beta\rangle.
\]
Expanding $\phi_e^\alpha,\phi_f^\beta$ gives the universal identity
\begin{equation}\label{eq:block-expand}
T_{\alpha\beta}(e,f)
=
\frac12\Bigl(
T_{e^+,f^+}+\beta\,T_{e^+,f^-}+\alpha\,T_{e^-,f^+}+\alpha\beta\,T_{e^-,f^-}
\Bigr).
\end{equation}

\medskip
\noindent\textbf{Local structure: exactly one contributing directed pair.}
Assume $e\neq f$ share a vertex $w$ (otherwise all four Hashimoto entries in
\eqref{eq:block-expand} are $0$). Write $e^+=(a\to b)$ and $f^+=(c\to d)$.
Since $T_{(u,v),(x,y)}=1$ iff $v=x$ and $y\neq u$, and $e,f$ share only the single
vertex $w$, there is \emph{exactly one} choice among $(e^\pm,f^\pm)$ for which the head
of $f^\pm$ equals the tail of $e^\pm$ (hence $T=1$), and the non-backtracking
constraint is automatic because the other endpoints are distinct.

More precisely, depending on the position of $w$ with respect to the reference orientations,
exactly one of the following four situations occurs:

\[
\begin{array}{c|c|c|c|c}
\text{Case} & \text{Position of }w & \text{Nonzero entry} & D_{w,e} & D_{w,f}\\\hline
\mathrm{A} & \mathrm{head}(e^+)=\mathrm{tail}(f^+) & T_{e^+,f^+}=1 & +1 & -1\\
\mathrm{B} & \mathrm{tail}(e^+)=\mathrm{tail}(f^+) & T_{e^-,f^+}=1 & -1 & -1\\
\mathrm{C} & \mathrm{head}(e^+)=\mathrm{head}(f^+) & T_{e^+,f^-}=1 & +1 & +1\\
\mathrm{D} & \mathrm{tail}(e^+)=\mathrm{head}(f^+) & T_{e^-,f^-}=1 & -1 & +1
\end{array}
\]
(All other entries among $T_{e^\pm,f^\pm}$ vanish.)

\medskip
\noindent\textbf{Computation of the blocks.}
Using \eqref{eq:block-expand} and the fact that exactly one of the four terms is $1$:

\smallskip
\noindent\emph{(i) The symmetric block.}
For $\alpha=\beta=+$, \eqref{eq:block-expand} becomes
\[
T_{++}(e,f)=\tfrac12\bigl(T_{e^+,f^+}+T_{e^+,f^-}+T_{e^-,f^+}+T_{e^-,f^-}\bigr).
\]
Hence for adjacent distinct $e,f$ one has $T_{++}(e,f)=\tfrac12$, and for non-adjacent
edges $T_{++}(e,f)=0$. Since the diagonal is $0$, this is exactly $T_{++}=\tfrac12 L$.

\smallskip
\noindent\emph{(ii) The antisymmetric block.}
For $\alpha=\beta=-$,
\[
T_{--}(e,f)=\tfrac12\bigl(T_{e^+,f^+}-T_{e^+,f^-}-T_{e^-,f^+}+T_{e^-,f^-}\bigr).
\]
For example, in Case A the unique nonzero entry is $T_{e^+,f^+}=1$.
In \eqref{eq:block-expand} with $\alpha=\beta=-$, this entry appears with coefficient $+1$,
so $T_{--}(e,f)=\tfrac12$. Since the table gives $D_{w,e}=+1$ and $D_{w,f}=-1$ in Case A,
we have $-\tfrac12 D_{w,e}D_{w,f}=+\tfrac12$, as claimed. The remaining cases are analogous.
In Cases A--D, substituting the unique nonzero entry gives
\[
T_{--}(e,f)= -\tfrac12\,D_{w,e}D_{w,f}.
\]
Since $(D^\top D)_{e,f}=\sum_{v}D_{v,e}D_{v,f}=D_{w,e}D_{w,f}$ for adjacent $e\neq f$
(and $(D^\top D)_{e,e}=2$), it follows that
\[
T_{--} = -\tfrac12(D^\top D-2I)= -\tfrac12 S.
\]

\smallskip
\noindent\emph{(iii) The cross blocks.}
For $(\alpha,\beta)=(+,-)$,
\[
T_{+-}(e,f)=\tfrac12\bigl(T_{e^+,f^+}-T_{e^+,f^-}+T_{e^-,f^+}-T_{e^-,f^-}\bigr),
\]
and substituting the unique nonzero entry in Cases A--D yields
\[
T_{+-}(e,f)= -\tfrac12\,D_{w,f}.
\]
But for adjacent $e\neq f$,
\[
(|D|^\top D)_{e,f}=\sum_v |D|_{v,e}D_{v,f} = |D|_{w,e}D_{w,f}=D_{w,f},
\]
so $T_{+-}=-\tfrac12|D|^\top D$.

Similarly, for $(\alpha,\beta)=(-,+)$ one obtains
\[
T_{-+}(e,f)= \tfrac12\,D_{w,e}=\tfrac12(D^\top|D|)_{e,f},
\]
hence $T_{-+}=\tfrac12 D^\top|D|$.

Collecting (i)--(iii) gives the asserted block decomposition.
\end{proof}

\begin{remark}[What is canonical]
The sector decomposition is canonically attached to the edge-reversal involution $P$. In particular, the symmetric block is the ordinary line-graph operator, and the antisymmetric block is the antisymmetric line-graph operator, both determined intrinsically by the underlying undirected graph (the latter up to switching). The cross term depends on the choice of reference orientation, but only through the expected switching covariance; the correction determinant extracted below is therefore an invariant of the underlying graph rather than of the auxiliary orientation choice.
\end{remark}

\section{Schur-complement factorization of $\det(I-wT)$}

Theorem~\ref{thm:sector} is the structural core of the paper. Its first global consequence is that the Ihara determinant itself admits a sector factorization: one factor comes from the ordinary line-graph sector, while the second is an explicit signed correction term built from the antisymmetric line graph and the mixed incidence operator.

Write $T$ in blocks with respect to $E_+\oplus E_-$:
\[
T=\begin{pmatrix}T_{++} & T_{+-}\\ T_{-+} & T_{--}\end{pmatrix}
=
\begin{pmatrix}
\frac12 L & -\frac12|D|^\top D\\[2pt]
\frac12 D^\top|D| & -\frac12 S
\end{pmatrix}.
\]

\begin{lemma}[Block determinant / Schur complement]\label{lem:schur}
If $I-w\mathcal A$ is invertible for a block matrix
$\begin{pmatrix}\mathcal A & \mathcal B\\ \mathcal C & \mathcal D\end{pmatrix}$, then
\[
\det\!\begin{pmatrix}I-w\mathcal A & -w\mathcal B\\ -w\mathcal C & I-w\mathcal D\end{pmatrix}
=\det(I-w\mathcal A)\cdot \det\!\bigl(I-w\mathcal D-w^2\mathcal C(I-w\mathcal A)^{-1}\mathcal B\bigr).
\]
\end{lemma}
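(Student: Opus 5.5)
This is the standard Schur-complement identity. The plan is to factor the block matrix as a product of a block lower-triangular matrix and a block upper-triangular matrix whose diagonal blocks are computable. Write $X:=I-w\mathcal A$, which is invertible by hypothesis. Write the full matrix as
\[
\begin{pmatrix} X & -w\mathcal B\\ -w\mathcal C & I-w\mathcal D\end{pmatrix}
=
\begin{pmatrix} I & 0\\ -w\mathcal C X^{-1} & I\end{pmatrix}
\begin{pmatrix} X & -w\mathcal B\\ 0 & I-w\mathcal D-w^2\mathcal C X^{-1}\mathcal B\end{pmatrix}.
\]
The first step is to verify this factorization by multiplying out the blocks:
\begin{itemize}[leftmargin=2.2em]
\item the $(1,1)$ and $(1,2)$ blocks are immediate;
\item the $(2,1)$ block is $-w\mathcal C X^{-1}X=-w\mathcal C$;
\item the $(2,2)$ block is $(-w\mathcal C X^{-1})(-w\mathcal B)+I-w\mathcal D-w^2\mathcal C X^{-1}\mathcal B=I-w\mathcal D$.
\end{itemize}

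The second step is to take determinants using multiplicativity. The left factor is block unitriangular, so its determinant is $1$. The right factor is block upper triangular, so its determinant is the product of the determinants of its diagonal blocks, namely $\det X\cdot\det(I-w\mathcal D-w^2\mathcal C X^{-1}\mathcal B)$. This is exactly the claimed formula.

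There is no serious obstacle here. The only point needing care is the block-triangular determinant rule. One can justify it by a Laplace expansion along the first block of columns, or by factoring the triangular matrix as $\mathrm{diag}(X,I)$ times a unitriangular matrix times $\mathrm{diag}(I,Z)$, where $Z$ is the Schur complement.

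In the application, $w$ may be regarded as a formal variable or restricted to $|w|$ small, so that $X=I-\tfrac w2 L$ is invertible. The identity is then an identity of rational functions in $w$.
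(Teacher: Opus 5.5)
Your proof is correct and takes the same approach as the paper. The paper uses exactly this factorization, a block lower-unitriangular factor times a block upper-triangular factor with the Schur complement in the $(2,2)$ slot, and then takes determinants.
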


\begin{proof}
This is the standard block determinant identity:
\[
\begin{pmatrix}I-w\mathcal A & -w\mathcal B\\ -w\mathcal C & I-w\mathcal D\end{pmatrix}
=
\begin{pmatrix}I & 0\\ -w\mathcal C(I-w\mathcal A)^{-1} & I\end{pmatrix}
\begin{pmatrix}I-w\mathcal A & -w\mathcal B\\ 0 & I-w\mathcal D-w^2\mathcal C(I-w\mathcal A)^{-1}\mathcal B\end{pmatrix},
\]
and taking determinants yields the claim.
\end{proof}
The Schur-complement identity is valid whenever $I-\frac{w}{2}L$ is invertible, which holds for all sufficiently small $|w|$.
Multiplying the block determinant identity by $\det(I-\tfrac w2 L)$ eliminates the inverse and yields an identity between polynomials in $w$
that holds on a neighborhood of $0$; hence it holds identically for all $w$.
\begin{theorem}[Edge-sector factorization]\label{thm:factor}
Let $L,S,D,|D|$ be as above. Then
\[
\det(I-wT)=\det\!\bigl(I-\tfrac w2 L\bigr)\cdot C_G(w),
\]
where
\[
C_G(w)=\det\!\Bigl(I+\tfrac w2 S+\tfrac{w^2}{4}\,D^\top|D|\,(I-\tfrac w2 L)^{-1}|D|^\top D\Bigr).
\]
\end{theorem}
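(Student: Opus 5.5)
The plan is to combine the sector decomposition of Theorem~\ref{thm:sector} with the block determinant identity of Lemma~\ref{lem:schur}, and then to extend the result from a neighborhood of $w=0$ to all $w$ by a polynomial-identity argument.

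\emph{Step 1 (conjugation).} Since $Q$ is orthogonal, $\det(I-wT)=\det(Q^\top(I-wT)Q)=\det(I-wQ^\top TQ)$. By Theorem~\ref{thm:sector} this is the determinant of the block matrix in Lemma~\ref{lem:schur} with
\[
\mathcal A=\tfrac12 L,\qquad \mathcal B=-\tfrac12|D|^\top D,\qquad \mathcal C=\tfrac12 D^\top|D|,\qquad \mathcal D=-\tfrac12 S .
\]

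\emph{Step 2 (Schur complement).} For $|w|<2/\rho(L)$ the matrix $I-\tfrac w2 L$ is invertible, so Lemma~\ref{lem:schur} applies. The lower-right factor is
\[
I-w\mathcal D-w^2\mathcal C(I-w\mathcal A)^{-1}\mathcal B
= I+\tfrac w2 S+\tfrac{w^2}{4}D^\top|D|\,(I-\tfrac w2 L)^{-1}|D|^\top D .
\]
The two minus signs, one from $-w^2$ and one in $\mathcal B$, combine to give the plus sign in front of $\tfrac{w^2}{4}$. The main care needed here is this sign bookkeeping, and it is routine. Hence $\det(I-wT)=\det(I-\tfrac w2 L)\,C_G(w)$ holds near $0$.

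\emph{Step 3 (extension to all $w$).} $C_G(w)$ is a rational function of $w$, and the identity holds on an open set, so the two sides agree as rational functions. Equivalently, $C_G(w)=\det(I-wT)/\det(I-\tfrac w2 L)$ as rational functions. The factorization is therefore valid wherever $C_G(w)$ is defined, that is, away from the finitely many $w$ with $\det(I-\tfrac w2 L)=0$.

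To make this precise for a general $w$ where $I-\tfrac w2L$ is singular, I would instead read $C_G(w)$ as this rational function, whose poles cancel against the zeros of $\det(I-\tfrac w2 L)$, since the product $\det(I-wT)$ is a polynomial. I expect this interpretive point to be the only real obstacle. To handle it, I would multiply through by a power of $\det(I-\tfrac w2 L)$: using the adjugate, the matrix entries of the Schur complement become polynomial after scaling. Both sides are then polynomials in $w$ that agree on a neighborhood of $0$, so they agree identically, as the remark preceding the theorem indicates.

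Finally, I would note that $C_G$ is independent of the reference orientation. A change of orientation replaces $D$ by $D\Sigma$ with $\Sigma$ a diagonal $\pm1$ matrix, while $|D|$ and $L$ are unchanged. Then $S\mapsto\Sigma S\Sigma$ and the correction term is conjugated by $\Sigma$, so the determinant is unchanged.
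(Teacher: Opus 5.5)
Your proposal is correct and follows the paper's proof: you conjugate by $Q$, apply the Schur-complement lemma to the sector blocks of Theorem~\ref{thm:sector}, and track the two minus signs to get $+\tfrac{w^2}{4}$, then extend from a neighborhood of $w=0$ by an identity-of-rational-functions argument. Your extension step is, if anything, stated more carefully than the paper's remark. It clears denominators with a suitable power of $\det(I-\tfrac w2 L)$ via the adjugate, where the paper multiplies by only a single factor. It also reads the factorization as an identity of rational functions, since $C_G$ can genuinely have poles.
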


\begin{proof}
Apply Lemma~\ref{lem:schur} to the block matrix $T=\begin{pmatrix}T_{++} & T_{+-}\\ T_{-+} & T_{--}\end{pmatrix}$
from Theorem~\ref{thm:sector}, with
\[
T_{++}=\tfrac12 L,\quad T_{+-}=-\tfrac12|D|^\top D,\quad T_{-+}=\tfrac12 D^\top|D|,\quad T_{--}=-\tfrac12 S.
\]
Then
\[
\det(I-wT)=\det(I-\tfrac w2 L)\cdot \det\!\Bigl(I+w\tfrac12 S - w^2\cdot \tfrac12 D^\top|D|\,(I-\tfrac w2 L)^{-1}\tfrac12|D|^\top D\Bigr).
\]
The minus signs combine to give a plus:
\[
-w^2\cdot\Bigl(\tfrac12\Bigr)\Bigl(-\tfrac12\Bigr)=+\tfrac{w^2}{4}.
\]
Therefore the second factor is exactly
\[
\det\!\Bigl(I+\tfrac w2 S+\tfrac{w^2}{4}\,D^\top|D|\,(I-\tfrac w2 L)^{-1}|D|^\top D\Bigr).
\]
\end{proof}

\subsection{Spectral consequences of the sector decomposition}\label{subsec:spectral-consequences}

A first payoff of Theorem~\ref{thm:sector} is that the Hermitian and skew-Hermitian parts of the Hashimoto operator acquire a canonical sector interpretation. Because the cross-sector blocks satisfy
$T_{-+}=-T_{+-}^\top$, the symmetric and antisymmetric sectors decouple completely in the Hermitian part. Thus the decomposition is not only algebraic: it immediately yields spectral control on the location of the Hashimoto spectrum in the complex plane.

\begin{proposition}[Numerical-range bounds for the Hashimoto spectrum]\label{prop:numerical-range}
Let $T$ be the Hashimoto matrix of a finite simple graph $G$, and write $T$ in the
$P$-eigenbasis as in Theorem~\ref{thm:sector}:
\[
T \sim
\begin{pmatrix}
\frac12 L & T_\times\\[2pt]
- T_\times^\top & -\frac12 S
\end{pmatrix},
\qquad
T_\times=-\frac12|D|^\top D.
\]
Let
\[
H=\frac{T+T^\top}{2},\qquad K=\frac{T-T^\top}{2}.
\]
Then every eigenvalue $\lambda\in\operatorname{Spec}(T)$ satisfies:
\begin{enumerate}
\item[(i)] \emph{(real-part bounds)}
\[
-\frac12\,\rho(S)\ \le\ \Re(\lambda)\ \le\ \frac12\,\rho(L).
\]
\item[(ii)] \emph{(imaginary-part bounds)}
\[
|\Im(\lambda)|\ \le\ \|K\|\ =\ \|T_\times\|\ =\ \frac12\,\sigma_{\max}(|D|^\top D)
\ \le\ \frac12\sqrt{\rho(\Delta)\,\rho(Q)},
\]
where $\sigma_{\max}$ denotes the largest singular value, $\|\cdot\|$ the operator norm,
$\Delta:=DD^\top=D_G-A_G$ is the (vertex) Laplacian, and $Q:=|D||D|^\top=D_G+A_G$ is the signless Laplacian.
\end{enumerate}
\end{proposition}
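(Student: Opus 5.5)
The plan is to use the standard numerical-range (Bendixson) argument. Since $Q$ is orthogonal, $\tilde T:=Q^\top TQ$ has the same spectrum as $T$. Its symmetric and skew parts are $Q^\top HQ$ and $Q^\top KQ$. Let $\lambda$ be an eigenvalue with unit eigenvector $x\in\mathbb C^{2m}$, so $\lambda=x^*\tilde Tx$. Then
\[
\Re\lambda=x^*\tilde H x,\qquad i\,\Im\lambda=x^*\tilde K x .
\]
Here $\tilde H$ is real symmetric and $\tilde K$ is real skew-symmetric, so both quadratic forms have the indicated reality. Hence $\Re\lambda\in[\lambda_{\min}(\tilde H),\lambda_{\max}(\tilde H)]$ and $|\Im\lambda|\le\|\tilde K\|=\|K\|$.

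Next compute $\tilde H$ and $\tilde K$ from Theorem~\ref{thm:sector}. The key observation is $T_{-+}=\tfrac12 D^\top|D|=-T_{+-}^\top$, with $T_\times=T_{+-}$. Because $L$ and $S$ are symmetric, the off-diagonal blocks cancel in the symmetric part, giving
\[
\tilde H=\mathrm{diag}\bigl(\tfrac12L,\,-\tfrac12S\bigr),\qquad \tilde K=\begin{pmatrix}0&T_\times\\-T_\times^\top&0\end{pmatrix}.
\]
For (i), $\lambda_{\max}(\tilde H)=\max(\tfrac12\lambda_{\max}(L),-\tfrac12\lambda_{\min}(S))$. We have $\lambda_{\max}(L)=\rho(L)$ by Perron--Frobenius, since $L$ is nonnegative. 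We also have $-\lambda_{\min}(S)\le\rho(S)$. One might worry here, because the claimed upper bound is only $\tfrac12\rho(L)$. It is recovered by noting that $S$ is a signed version of $L$, so entrywise $|S|=L$ and $\rho(S)\le\rho(|S|)=\rho(L)$. Similarly, $\lambda_{\min}(\tilde H)=\min(\tfrac12\lambda_{\min}(L),-\tfrac12\lambda_{\max}(S))$. Here $-\tfrac12\lambda_{\max}(S)\ge-\tfrac12\rho(S)$. The remaining piece needs $\lambda_{\min}(L)\ge-\rho(S)$. For this I would use $L=|D|^\top|D|-2I\succeq-2I$ and $S=D^\top D-2I\succeq -2I$. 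These give $\lambda_{\min}(L)\ge-2$ and $\rho(S)\ge 2$, provided $G$ has a vertex of degree $\ge2$ or trivial cases are handled. This case is the main obstacle. I expect to settle it by showing $\lambda_{\max}(S)\ge2$ whenever some vertex has degree $\ge2$. The test vector is $D^\top\mathbf 1_v$ restricted suitably, or equivalently $\lambda_{\max}(D^\top D)=\lambda_{\max}(DD^\top)\ge \deg v+1\ge 4$ for the Laplacian of a graph containing a path of length two. The degenerate case of a matching can be handled directly: there $T$ is nilpotent, so $\lambda=0$.

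For (ii), the skew block matrix $\tilde K$ has singular values equal to those of $T_\times$, each appearing twice. Hence $\|K\|=\|T_\times\|=\tfrac12\sigma_{\max}(|D|^\top D)$. Finally, submultiplicativity gives
\[
\sigma_{\max}(|D|^\top D)\le\||D|\|\,\|D\|=\sqrt{\rho(|D||D|^\top)\rho(DD^\top)}=\sqrt{\rho(Q)\rho(\Delta)} .
\]
This uses the standard identities $DD^\top=D_G-A_G$ and $|D||D|^\top=D_G+A_G$, which follow by computing the diagonal entries (degrees) and the off-diagonal entries ($\mp1$ for adjacent vertices).
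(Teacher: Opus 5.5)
Your overall route matches the paper's. You use numerical-range (Bendixson) bounds in the $P$-eigenbasis, and the relation $T_{-+}=-T_{+-}^\top$ makes the Hermitian part equal to $\tfrac12\operatorname{diag}(L,-S)$. You then use the doubled singular values of the skew block and the estimate $\sigma_{\max}(|D|^\top D)\le\||D|\|\,\|D\|$. Part (ii) is correct. The upper bound in (i), via $-\lambda_{\min}(S)\le\rho(S)\le\rho(|S|)=\rho(L)$, is also correct.

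You are right that the lower bound needs the extra inequality $\lambda_{\min}(L)\ge-\rho(S)$. The paper's proof asserts $\lambda_{\min}(H)\ge-\tfrac12\rho(S)$ without argument. However, your plan for this step does not work.
\begin{itemize}
\item The semidefiniteness $S\succeq-2I$ bounds $\lambda_{\min}(S)$ from below. It gives no lower bound on $\rho(S)$.
\item The lemma ``$\lambda_{\max}(S)\ge2$ whenever some vertex has degree $\ge2$'' is false. For the path $P_3$, $S$ is $2\times2$ with zero diagonal and off-diagonal entries $\pm1$. So $\operatorname{Spec}(S)=\{\pm1\}$, even though the middle vertex has degree $2$.
\item More generally, for any path or odd cycle on $n$ vertices, $\lambda_{\max}(\Delta)=2+2\cos(\pi/n)<4$. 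Hence $\lambda_{\max}(S)=2\cos(\pi/n)<2$.
\item The estimate $\lambda_{\max}(\Delta)\ge d_{\max}+1$ yields $4$ only when $d_{\max}\ge3$. A path of length two gives only $3$, not $4$.
\end{itemize}

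The missing idea is a cycle/forest dichotomy rather than a degree condition.
\begin{itemize}
\item If $G$ contains a cycle, the signed indicator vector $x$ of that cycle lies in $\ker D$. Then $Sx=-2x$, so $\rho(S)\ge2\ge-\lambda_{\min}(L)$, using $L+2I=|D|^\top|D|\succeq0$.
\item If $G$ is a forest, it is bipartite. Orient every edge from one colour class to the other. Then $D=\Sigma|D|$ for a diagonal vertex sign matrix $\Sigma$, so $D^\top D=|D|^\top|D|$, i.e.\ $S=L$ in this gauge. Switching invariance then gives $\rho(S)=\rho(L)\ge-\lambda_{\min}(L)$ for every orientation.
\end{itemize}
The forest case also covers matchings, so you do not need a separate nilpotency argument. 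With this replacement for your degree-based lemma, the numerical-range argument gives (i) in full.
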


\begin{proof}
Since $T_{-+}=-T_{+-}^\top$, the off-diagonal blocks cancel in $T+T^\top$, hence in the
$P$-eigenbasis one has
\[
H=\frac{T+T^\top}{2}\sim
\frac12
\begin{pmatrix}
L & 0\\
0 & -S
\end{pmatrix}.
\]
For any eigenpair $Tx=\lambda x$ with $x\neq0$,
\[
\Re(\lambda)=\frac{x^* Hx}{x^* x},
\]
so $\Re(\lambda)$ lies in the interval $[\lambda_{\min}(H),\lambda_{\max}(H)]$.
Because $\operatorname{Spec}(H)=\frac12\operatorname{Spec}(L)\cup\bigl(-\frac12\operatorname{Spec}(S)\bigr)$,
this gives (i), and in particular
\[
\lambda_{\max}(H)\le \frac12\rho(L),\qquad \lambda_{\min}(H)\ge -\frac12\rho(S).
\]

For (ii), note that in the same basis
\[
K=\frac{T-T^\top}{2}\sim
\begin{pmatrix}
0 & T_\times\\
- T_\times^\top & 0
\end{pmatrix}.
\]
Every eigenvalue of $T$ lies in the numerical range
\[
W(T)=\left\{\frac{x^*Tx}{x^*x}:x\in\mathbb{C}^{2m}\setminus\{0\}\right\}.
\]
Writing $T=H+K$ with $H^\top=H$ and $K^\top=-K$, we have
\[
\Im\!\left(\frac{x^*Tx}{x^*x}\right)=
\Im\!\left(\frac{x^*Kx}{x^*x}\right)
=
\Re\!\left(\frac{x^*(iK)x}{x^*x}\right),
\]
and $iK$ is Hermitian with $\|iK\|=\|K\|$. Hence
\[
\left|\Im\!\left(\frac{x^*Tx}{x^*x}\right)\right|
\le \|K\|
\quad\text{for all }x\neq0,
\]
so $|\Im(\lambda)|\le\|K\|$ for every $\lambda\in\operatorname{Spec}(T)$.

Finally, for $K\sim\begin{psmallmatrix}0&B\\-B^\top&0\end{psmallmatrix}$ the singular values of $K$
are exactly those of $B$ (each with multiplicity $2$), so $\|K\|=\|B\|$; applying this with
$B=T_\times$ yields $\|K\|=\|T_\times\|$, and $\|T_\times\|=\frac12\sigma_{\max}(|D|^\top D)$ from
$T_\times=-\frac12|D|^\top D$.

For the final inequality, note that
\[
\sigma_{\max}(|D|^\top D)^2=\rho\bigl((|D|^\top D)^\top(|D|^\top D)\bigr)
=\rho\bigl(D^\top(|D||D|^\top)D\bigr)=\rho(D^\top Q D).
\]
Thus
\[
\sigma_{\max}(|D|^\top D)^2=\|Q^{1/2}D\|^2\le \|Q^{1/2}\|^2\,\|D\|^2
=\rho(Q)\,\rho(DD^\top)=\rho(Q)\,\rho(\Delta),
\]
which gives $\sigma_{\max}(|D|^\top D)\le \sqrt{\rho(\Delta)\rho(Q)}$ and hence the stated bound.
\end{proof}

\begin{remark}[Comparison with the trivial Perron bound]
Since $T$ is $0$--$1$ with row sums at most $d_{\max}-1$, Perron--Frobenius gives
$\rho(T)\le d_{\max}-1$, hence $|\Re(\lambda)|\le d_{\max}-1$.
Proposition~\ref{prop:numerical-range}(i) improves this whenever
$\rho(L)<2(d_{\max}-1)$ (and similarly on the negative side with $S$).
For instance, for the star $K_{1,n}$ one has $\rho(L)=n-1$, so the bound gives
$\Re(\lambda)\le (n-1)/2$ compared to $d_{\max}-1=n-1$.
The novelty is not the numerical-range argument itself, but the explicit identification of the
Hermitian part $H$ with the block-diagonal operator $\frac12\operatorname{diag}(L,-S)$ in the $P$-eigenbasis.
\end{remark}

\subsection{Localization of the trivial factors}\label{subsec:localization-trivial}

We now show that the Schur-complement factorization is not merely a formal determinant identity. The trivial roots $w=\pm1$ localize on canonical edge subspaces determined by the signed and unsigned incidence matrices, so the two factors in Theorem~\ref{thm:factor} have intrinsic geometric support.

\begin{theorem}[Homological localization of the trivial roots]\label{thm:trivial-localization}
Let $G$ be a connected simple graph with oriented incidence matrix
$D\in\mathbb{R}^{n\times m}$ and unoriented incidence matrix $|D|$.
Let
\[
L = |D|^\top|D| - 2I_m,
\qquad
S = D^\top D - 2I_m,
\]
and write the Ihara determinant in edge-sector form
\[
\det(I-wT)=
\det\!\bigl(I-\tfrac{w}{2}L\bigr)\,
C_G(w),
\]
where
\[
C_G(w)=\det M(w),\qquad
M(w)=I+\frac{w}{2}S+
\frac{w^2}{4}D^\top|D|
\Bigl(I-\tfrac{w}{2}L\Bigr)^{-1}|D|^\top D.
\]

Then:

\begin{enumerate}
\item[(i)] For every $x\in\ker(|D|)$,
\[
\Bigl(I-\tfrac{w}{2}L\Bigr)x=(1+w)x.
\]
Consequently $(1+w)^{m-n}$ divides
$\det(I-\tfrac{w}{2}L)$.

\item[(ii)] For every $x\in\ker(D)$,
\[
M(w)x=(1-w)x.
\]
Consequently $(1-w)^{m-n+1}$ divides $C_G(w)$.
\end{enumerate}
\end{theorem}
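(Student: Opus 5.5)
The plan is to check the two eigenvector identities directly from the incidence definitions of $L$, $S$ and the mixed blocks in Lemma~\ref{lem:LSfrominc} and Theorem~\ref{thm:sector}. We then turn each identity into a divisibility statement by splitting off an invariant subspace of a symmetric matrix. The dimension counts come from the standard rank facts for incidence matrices of a connected graph: $\operatorname{rank} D = n-1$, and $\operatorname{rank}|D| = n$ or $n-1$ according as $G$ is non-bipartite or bipartite.

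\emph{Part (i).} Take $x\in\ker|D|$. Then $|D|^\top|D|x=0$, so $Lx=-2x$ and hence $(I-\tfrac w2 L)x=(1+w)x$. Thus $-2$ is an eigenvalue of the real symmetric matrix $L$ with multiplicity at least $\dim\ker|D| = m-\operatorname{rank}|D|\ge m-n$. Diagonalising $L$ orthogonally gives
\[
\det(I-\tfrac w2 L)=\prod_\lambda\bigl(1-\tfrac{w\lambda}{2}\bigr).
\]
Each eigenvalue $\lambda=-2$ contributes a factor $(1+w)$, so $(1+w)^{m-n}$ divides this polynomial.

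\emph{Part (ii), eigenvector identity.} Take $x\in\ker D$. Then $D^\top Dx=0$, so $Sx=-2x$, and also $|D|^\top Dx=0$. The mixed term of $M(w)$ ends in $|D|^\top D$, so it annihilates $x$, and $M(w)x=x-wx=(1-w)x$. Since $G$ is connected, $\dim\ker D=m-n+1$.

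\emph{Part (ii), divisibility.} Unlike $I-\tfrac w2 L$, the matrix $M(w)$ is rational in $w$, so we need more than an eigenvalue count. For real $w$ with $I-\tfrac w2L$ invertible, $M(w)$ is symmetric. Indeed $S$ is symmetric, and the mixed term has the form $B^\top R B$ with $B=|D|^\top D$ and $R=(I-\tfrac w2 L)^{-1}$ symmetric. Fix an orthonormal basis adapted to $\mathbb R^m=\ker D\oplus(\ker D)^\perp$; this basis does not depend on $w$. Symmetry and invariance of $\ker D$ give the block form
\[
M(w)\sim\operatorname{diag}\bigl((1-w)I_{m-n+1},\,M'(w)\bigr),
\]
where $M'(w)$ is the compression of $M(w)$ to $(\ker D)^\perp=\operatorname{im}D^\top$. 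Hence $C_G(w)=(1-w)^{m-n+1}\det M'(w)$, first for real $w$ near $0$ and then identically as rational functions.

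\emph{Main obstacle.} The delicate point is what ``divides'' means for the rational function $C_G$: we must rule out a compensating pole of $\det M'(w)$ at $w=1$. Such a pole could only come from $(I-\tfrac w2L)^{-1}$, that is, from an eigenvalue $2$ of $L$. I would first record the clean statement that $\det M'(w)$ is regular wherever $M(w)$ is. I would then close the gap via Theorem~\ref{thm:factor} and Bass' formula:
\[
\det(I-\tfrac w2 L)\,\det M'(w)=\frac{\det(I-wT)}{(1-w)^{m-n+1}}.
\]
The Bass determinant contains $(1-w^2)^{m-n}$ times a vertex determinant which, for connected $G$, vanishes at $w=1$ because $(D_G-A_G)\mathbf 1=0$. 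So the right-hand side is a polynomial, and the $(1-w)$-multiplicity in $C_G$ is genuinely at least $m-n+1$ in this sense. If that bookkeeping fails, I would fall back to the regular-function interpretation.
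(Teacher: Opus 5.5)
You have the eigenvector identities and part (i) right, but your final Bass step does not prove the divisibility in (ii). In fact that divisibility is false in general.

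\textbf{What matches the paper.} Part (i) and the identity $M(w)x=(1-w)x$ on $\ker D$ are exactly the paper's argument. Your reduction $C_G(w)=(1-w)^{m-n+1}\det M'(w)$, using the symmetry of $M(w)$, is correct. It is more careful than the paper, which jumps from $\dim\ker D=m-n+1$ straight to divisibility. You also locate the danger correctly: a pole of $\det M'(w)$ at $w=1$, coming from an eigenvalue $2$ of $L$.

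\textbf{Where the argument breaks.} The fact that $\det(I-\tfrac w2L)\det M'(w)=\det(I-wT)/(1-w)^{m-n+1}$ is a polynomial only re-proves that $(1-w)^{m-n+1}$ divides $\det(I-wT)$. A pole of $\det M'(w)$ at $w=1$ is fully compatible with this, because the zero of $\det(I-\tfrac w2L)$ at $w=1$ can cancel it. So your conclusion about the $(1-w)$-multiplicity of $C_G$ is a non sequitur.

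\textbf{Why no argument can close it.} For connected $G$ with $m>n$, put $h(w)=\det(I-wA_G+w^2(D_G-I))$. Let $J$ be the all-ones matrix and $\kappa(G)$ the number of spanning trees. Jacobi's formula together with the matrix-tree identity $\operatorname{adj}(D_G-A_G)=\kappa(G)J$ gives
\[
h'(1)=\kappa(G)\,\mathbf 1^\top(2D_G-2I-A_G)\mathbf 1=2(m-n)\kappa(G)\neq 0.
\]
Hence $\det(I-wT)$ vanishes at $w=1$ to order exactly $m-n+1$, and so $\operatorname{ord}_{w=1}C_G=m-n+1-\operatorname{mult}_L(2)$.

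\textbf{Counterexample.} Take $G=K_{2,4}$, with $n=6$ and $m=8$. Then $\det(I-wT)=(1-w^4)^3(1-9w^4)$. Since $L(K_{2,4})=K_2\square K_4$, one has $\operatorname{Spec}L=\{4,2,0^3,(-2)^3\}$. Therefore
\[
C_G(w)=\frac{(1-w)^2(1+w^2)^3(1-9w^4)}{1-2w},
\]
which vanishes at $w=1$ only to order $2<3=m-n+1$. Correspondingly, your $\det M'(w)$ has a simple pole at $w=1$.

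\textbf{What your argument does prove.} It proves (ii) under the extra hypothesis $2\notin\operatorname{Spec}(L)$, equivalently that $4$ is not an eigenvalue of $|D|\,|D|^\top$. Under that hypothesis $M(w)$, and hence $\det M'(w)$, is regular at $w=1$, and your block reduction finishes the proof. Your ``regular-function'' fallback is this weaker statement, not the one claimed. The paper's own proof has the same hole.
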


\begin{proof}
(i) Since $L=|D|^\top|D|-2I_m$, any $x\in\ker(|D|)$ satisfies
$|D|^\top|D|x=0$, hence $Lx=-2x$.
Therefore
\[
\Bigl(I-\tfrac{w}{2}L\Bigr)x
=
x-\tfrac{w}{2}(-2x)
=
(1+w)x.
\]
For connected $G$, one has $\operatorname{rank}(|D|)=n$ if $G$ is non-bipartite and $\operatorname{rank}(|D|)=n-1$ if $G$ is bipartite; hence
\[
\dim\ker(|D|)=
\begin{cases}
m-n & \text{if $G$ is non-bipartite},\\
m-n+1 & \text{if $G$ is bipartite}.
\end{cases}
\]
In particular, $(1+w)^{m-n}$ divides $\det(I-\tfrac{w}{2}L)$ in all cases.

(ii) Let $x\in\ker(D)$. Then $Dx=0$, so
$|D|^\top D x=0$ and the interference term in $M(w)$ vanishes:
\[
D^\top|D|
\Bigl(I-\tfrac{w}{2}L\Bigr)^{-1}
|D|^\top D x = 0.
\]
Since $S=D^\top D-2I_m$ and $Dx=0$,
we have $Sx=-2x$. Hence
\[
M(w)x
=
x+\tfrac{w}{2}(-2x)
=
(1-w)x.
\]
For connected $G$, $\operatorname{rank}(D)=n-1$,
so $\dim\ker(D)=m-n+1$, yielding the stated multiplicity.
\end{proof}

\begin{remark}[Compatibility with Bass]
The Bass--Hashimoto formula
\[
\det(I-wT)
=
(1-w^2)^{m-n}
\det(I-wA_G+w^2(D_G-I))
\]
shows that the total multiplicity of the roots at $w=\pm1$
matches the localization above:
the additional $(1-w)$ factor arises from the
one-dimensional kernel of the Laplacian $D_G-A_G$,
while the $w=-1$ behavior depends on bipartiteness.
\end{remark}

\section{Log--trace expansion and the correction sector}

The factorization theorem identifies a distinguished residual object:
\[
C_G(w)=\frac{\det(I-wT)}{\det(I-\tfrac w2 L)}.
\]
This is the part of the Hashimoto determinant not already accounted for by the ordinary line-graph sector. The purpose of the present section is to clarify exactly what this residual factor does and does not encode.

Formally (and as a power series around $w=0$) one has
\[
\log\det(I-wX)=-\sum_{k\ge1}\frac{w^k}{k}\operatorname{tr}(X^k),
\]
hence
\begin{equation}\label{eq:logC}
\log C_G(w)=-\sum_{k\ge1}\frac{w^k}{k}\Bigl(\operatorname{tr}(T^k)-2^{-k}\operatorname{tr}(L^k)\Bigr).
\end{equation}
We emphasize that \eqref{eq:logC} is not being used to define a new combinatorial trace theory. Its role here is interpretive: once Theorem~\ref{thm:factor} isolates the correction sector, \eqref{eq:logC} explains how the first surviving coefficient records the first genuinely non-line-graph contribution to the Hashimoto determinant.

\begin{theorem}[Resolution principle for line-graph cospectral pairs]\label{thm:resolution}
Let $G,H$ be finite simple graphs with
\[
\det(I-\tfrac w2 L(G))=\det(I-\tfrac w2 L(H))
\]
(equivalently, $\operatorname{Spec}(L(G))=\operatorname{Spec}(L(H))$). Then
\[
\det(I-wT_G)=\det(I-wT_H)\quad\Longleftrightarrow\quad C_G(w)=C_H(w).
\]
In particular, for line-graph-cospectral pairs all remaining Ihara separation is carried entirely by the signed correction factor. Equivalently, the first coefficient at which
\[
\det(I-wT_G)-\det(I-wT_H)
\]
differs is exactly the first coefficient at which
\[
C_G(w)-C_H(w)
\]
differs.
\end{theorem}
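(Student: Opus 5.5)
The plan is to read everything off Theorem~\ref{thm:factor}, working in the ring of formal power series $\mathbb R[[w]]$ (or in the field of rational functions $\mathbb R(w)$). Put $p_G(w)=\det(I-\tfrac w2 L(G))$. This is a polynomial with constant term $1$, so it is a unit in $\mathbb R[[w]]$ and a nonzero element of $\mathbb R(w)$. The first step is to record that $C_G(w)=\det(I-wT_G)/p_G(w)$ as an identity of power series. Theorem~\ref{thm:factor} gives $\det(I-wT_G)=p_G(w)\,C_G(w)$ for small $|w|$. Here $C_G(w)$ is a rational function, since the inverse $(I-\tfrac w2 L)^{-1}$ has entries that are rational in $w$. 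In fact $C_G$ is a polynomial, because it equals the quotient of two polynomials and that quotient is analytic near $0$; this observation is not needed for the argument.

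Next, I would justify the parenthetical equivalence $p_G=p_H \iff \operatorname{Spec}L(G)=\operatorname{Spec}L(H)$. Write $p_G(w)=\prod_i(1-\tfrac w2\lambda_i)$ over the $m_G$ eigenvalues $\lambda_i$ of $L(G)$. The reversed characteristic polynomial determines the nonzero eigenvalues with their multiplicities. The number of zero eigenvalues is then fixed once the size $m$ is fixed, and $m$ is determined as follows. If the spectra agree, the sizes agree trivially. If $p_G=p_H$, then the trace of $L^2$ equals twice the number of edges of the line graph, and these traces agree; one also needs $m_G=m_H$ to conclude. I would note that the statement intends cospectrality of the line graphs, which includes equal order, so this point does not matter for the main claim. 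The hypothesis actually used is $p_G=p_H$.

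For the main equivalence: if $p_G=p_H=:p$, then $\det(I-wT_G)-\det(I-wT_H)=p(w)\bigl(C_G(w)-C_H(w)\bigr)$. Because $p\neq0$ in the integral domain $\mathbb R[[w]]$, the left side vanishes if and only if $C_G=C_H$. For the refinement about the first differing coefficient, let $C_G-C_H=c_kw^k+O(w^{k+1})$ with $c_k\neq0$. Since $p(0)=1$, the product equals $c_kw^k+O(w^{k+1})$. So the lowest-order nonzero coefficient of the Ihara difference occurs at the same index $k$, and it has the same value $c_k$. Conversely, since $p^{-1}$ also has constant term $1$, the same argument applies in the other direction. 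As a consistency check, one can also read this off \eqref{eq:logC}: the $\operatorname{tr}(L^k)$ terms agree for $G$ and $H$, so the log-series differ exactly in the $\operatorname{tr}(T^k)$ terms.

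The only delicate step is the bookkeeping above: making sure the factorization holds as an identity in $\mathbb R[[w]]$ rather than only pointwise. The paper's remark after Lemma~\ref{lem:schur} already addresses this, since an identity of analytic functions on a neighborhood of $0$ gives equality of their power series. Beyond this, the proof is purely formal.
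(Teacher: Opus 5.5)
Your argument is the paper's: the theorem is read off directly from the factorization $\det(I-wT)=\det(I-\tfrac w2 L)\,C_G(w)$ by cancelling the common factor $p$ with $p(0)=1$, which you make precise in $\mathbb R[[w]]$. Your remark that $\det(I-\tfrac w2 L(G))=\det(I-\tfrac w2 L(H))$ recovers $\operatorname{Spec}(L)$ only once $m_G=m_H$ is a fair sharpening of the parenthetical. One aside is false, though you correctly flag it as unused: a rational function analytic at $0$ need not be a polynomial, and $C_G$ generally is not one. For example, $C_{K_3}(w)=(1-w^3)^2/\bigl((1-w)(1+\tfrac w2)^2\bigr)=(1-w)(1+w+w^2)^2/(1+\tfrac w2)^2$ has a double pole at $w=-2$.
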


\begin{proof}
This is immediate from Theorem~\ref{thm:factor}:
\[
\det(I-wT)=\det(I-\tfrac w2 L)\,C_G(w).
\]
If $\det(I-\tfrac w2 L(G))=\det(I-\tfrac w2 L(H))$, then equality of Hashimoto determinants is equivalent to equality of the correction factors, and the first separating coefficient is carried by $C_G(w)-C_H(w)$.
\end{proof}

\section{A worked pair and the correction-sector mechanism}

We now give a concrete witness for the resolution principle. The point of the example is not merely to exhibit a pair with different Ihara zeta functions, but to show that once adjacency and line-graph spectra agree, the first surviving separation is forced into the correction sector; later examples will show that the gauge-invariant mixed shadows can already separate before one passes to the final scalar determinant.

\begin{example}[Adjacency- and line-graph cospectral but Ihara-distinct]\label{ex:holygrail}

Let \(G\) and \(H\) be the graphs on \(\{0,1,\dots,11\}\) with edge sets
\[
\begin{aligned}
E(G)=\{&
\{0,1\},\{0,6\},\{0,7\},\{0,8\},\{0,10\},\{1,4\},\{1,9\},\{1,10\},\{2,6\},\{3,4\},\\
&\{3,5\},\{3,11\},\{4,6\},\{4,8\},\{5,6\},\{5,9\},\{5,11\},\{6,7\},\{6,9\},\{7,10\},\\
&\{8,11\},\{9,11\}\},
\end{aligned}
\]
\[
\begin{aligned}
E(H)=\{&
\{0,3\},\{0,9\},\{0,10\},\{1,4\},\{1,5\},\{1,9\},\{1,11\},\{2,6\},\{3,4\},\{3,10\},\\
&\{4,6\},\{4,8\},\{5,7\},\{5,8\},\{5,11\},\{6,7\},\{6,9\},\{6,10\},\{6,11\},\{7,11\},\\
&\{8,10\},\{9,10\}\}.
\end{aligned}
\]

Then $G\not\cong H$; for example, the sorted multisets of vertex invariants
\[
\bigl(\deg(v),\,\{\deg(u):u\sim v\},\,\#\{\text{triangles through }v\}\bigr)
\]
differ for $G$ and $H$. Concretely, $G$ has a vertex with triple
\[
\bigl(3,\,(3,5,6),\,2\bigr)
\]
while no vertex of $H$ has this triple. However, the two graphs have the same degree multiset
\[
(6,5,4,4,4,4,4,3,3,3,3,1).
\]
Moreover, $G$ and $H$ are adjacency-cospectral and line-graph-cospectral:
\[
\operatorname{Spec}(A_G)=\operatorname{Spec}(A_H),
\qquad
\operatorname{Spec}(L(G))=\operatorname{Spec}(L(H)),
\]
but are not cospectral for the antisymmetric line graph:
\[
\operatorname{Spec}(S(G))\neq\operatorname{Spec}(S(H)).
\]
Finally, their Hashimoto determinants (equivalently, Ihara zeta functions) differ; the first discrepancy occurs at order $w^6$:
\[
[w^6]\det(I-wT_G)=-28,
\qquad
[w^6]\det(I-wT_H)=-30.
\]
\end{example}

\begin{proposition}[The first correction coefficient separates at order six]\label{prop:c6}
Let $C_G(w)$ be defined as in Theorem~\ref{thm:factor}, and expand
\[
C_G(w)=\sum_{k\ge0}c_k(G)\,w^k.
\]
For the graphs $G,H$ in Example~\ref{ex:holygrail}, one has
\[
c_6(G)=\frac{38663}{32},
\qquad
c_6(H)=\frac{38599}{32},
\]
so $c_6(G)-c_6(H)=2$. In particular, $C_G(w)\neq C_H(w)$ and the first divergence occurs at order $6$.
\end{proposition}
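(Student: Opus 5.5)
Since $G$ and $H$ are line-graph-cospectral (Example~\ref{ex:holygrail}), the difference statement should follow with almost no computation from Theorem~\ref{thm:factor} and Theorem~\ref{thm:resolution}. I would write $p(w):=\det(I-\tfrac w2 L(G))=\det(I-\tfrac w2 L(H))$, a common polynomial with $p(0)=1$, and $h_G(w):=\det(I-wT_G)$, $h_H(w):=\det(I-wT_H)$. Theorem~\ref{thm:factor} gives $C_G=h_G/p$ and $C_H=h_H/p$ as formal power series, so
\[
C_G(w)-C_H(w)=\frac{h_G(w)-h_H(w)}{p(w)}.
\]
Because $p$ is invertible in $\mathbb Q[[w]]$ with constant term $1$, the lowest-order nonzero coefficient of the left side equals that of $h_G-h_H$. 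Example~\ref{ex:holygrail} states that the Hashimoto determinants agree through order $w^5$ and that $[w^6]h_G-[w^6]h_H=-28-(-30)=2$. Hence $c_k(G)=c_k(H)$ for $k\le5$ and $c_6(G)-c_6(H)=2$. In particular $C_G\neq C_H$, with first divergence at order $6$.

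The individual values $c_6(G)=38663/32$ and $c_6(H)=38599/32$ require explicit coefficient extraction. Write $p(w)=\sum_j p_jw^j$ and $h(w)=\sum_j h_jw^j$. The identity $C=h/p$ gives the recursion
\[
c_0=1,\qquad c_k=h_k-\sum_{j=1}^{k}p_j\,c_{k-j}.
\]
To run it I would obtain the coefficients as follows.
\begin{itemize}[leftmargin=2.2em]
\item The $p_j$ come from $p_j=(-\tfrac12)^j e_j(L)$, where $e_j$ are the elementary symmetric functions of the line-graph eigenvalues. Equivalently, they come via Newton's identities from $\operatorname{tr}(L^k)$, $k\le6$, which count closed walks in $L(G)$.
\item The $h_j$ for $j\le6$ are most easily computed from Bass' formula: expand $(1-w^2)^{m-n}\det(I-wA_G+w^2(D_G-I))$ with $n=12$ and $m=22$.
\item As a consistency check I would cross-verify $h_j$ via \eqref{eq:logC}, using $\operatorname{tr}(T^k)$, which counts closed non-backtracking walks.
\end{itemize}
The denominator $32=2^5$ should arise from the powers $2^{-j}$ in the $p_j$, after cancellation of the top power of $2$.

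The main obstacle is purely computational: producing the exact rational values $c_6$. This needs the degree-$6$ truncations of two determinants on $22\times22$ (respectively $12\times12$) matrices. There is no structural shortcut for the absolute values, only for their difference. I would do this in exact rational arithmetic by machine, expanding $\det(I-\tfrac w2 L)$ and the Bass determinant as polynomials in $w$ truncated at $w^7$. I would also check that the computed $c_6(G)-c_6(H)$ equals $2$, as forced by the structural argument above; this serves as an internal check on the arithmetic.
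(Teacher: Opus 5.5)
Your proposal is correct and follows essentially the paper's route. The paper also obtains the individual values of $c_6$ by direct power-series expansion of $C_G(w)=\det(I-wT)/\det(I-\tfrac w2 L)$. It justifies $c_6(G)-c_6(H)=2$ and the order-$6$ first divergence as you do: the Hashimoto discrepancy from Example~\ref{ex:holygrail} is divided by the common line-graph factor, which has constant term $1$. Your recursion $c_k=h_k-\sum_{j\ge1}p_j c_{k-j}$, with the $h_j$ taken from Bass' formula, is a concrete implementation of that same computation.
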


\begin{proof}
This is a direct computation from the explicit formula in Theorem~\ref{thm:factor} by expanding $C_G(w)$ as a power series at $w=0$ using
\[
(I-\tfrac w2 L)^{-1}=\sum_{j\ge0}\Bigl(\tfrac w2\Bigr)^j L^j
\]
inside the Schur-complement determinant. Equivalently, one may compute the formal power-series expansion of
\[
C_G(w)=\frac{\det(I-wT)}{\det(I-\tfrac w2 L)}
\]
around $w=0$. The resulting coefficient values are
\[
c_6(G)=\frac{38663}{32},\qquad c_6(H)=\frac{38599}{32}.
\]
Hence
\[
c_6(G)-c_6(H)=\frac{64}{32}=2,
\]
matching the first discrepancy
\[
[w^6]\det(I-wT_G)-[w^6]\det(I-wT_H)=2
\]
from Example~\ref{ex:holygrail}, since the line-graph factor $\det(I-\tfrac w2 L)$ is identical for $G$ and $H$.
\end{proof}

\begin{corollary}[All separation flows through the signed correction factor]\label{cor:allthroughC}
For the pair in Example~\ref{ex:holygrail}, since $\operatorname{Spec}(L(G))=\operatorname{Spec}(L(H))$ one has
\[
\det(I-\tfrac w2 L(G))=\det(I-\tfrac w2 L(H)).
\]
Therefore Theorem~\ref{thm:factor} implies
\[
\det(I-wT_G)-\det(I-wT_H)=\det(I-\tfrac w2 L)\cdot(C_G(w)-C_H(w)),
\]
and the first divergence at order $6$ is carried by $C_G(w)-C_H(w)$, quantified by Proposition~\ref{prop:c6}.
\end{corollary}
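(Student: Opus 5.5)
The corollary follows from Theorem~\ref{thm:factor} together with the spectral hypothesis recorded in Example~\ref{ex:holygrail}. First, since $L(G)$ and $L(H)$ are real symmetric $m\times m$ matrices with the same spectrum (here $m=22$ for both graphs), we have
\[
\det\!\bigl(I-\tfrac w2 L(G)\bigr)=\prod_{\lambda}\bigl(1-\tfrac w2\lambda\bigr)=\det\!\bigl(I-\tfrac w2 L(H)\bigr)
\]
as polynomials in $w$, the product running over the common eigenvalue multiset with multiplicity. Call this common polynomial $\ell(w)$. Because $\det(I-\tfrac w2 L)$ depends only on the line graph, it is independent of the reference orientation. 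Theorem~\ref{thm:factor} applied to each graph then gives $\det(I-wT_G)=\ell(w)C_G(w)$ and $\det(I-wT_H)=\ell(w)C_H(w)$, where both $C$'s are regarded as formal power series at $w=0$ (legitimate since $\ell(0)=1$). Subtracting yields
\[
\det(I-wT_G)-\det(I-wT_H)=\ell(w)\bigl(C_G(w)-C_H(w)\bigr).
\]

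For the statement about the first divergence, we use that $\ell(w)$ is a unit in $\mathbb Q[[w]]$ with constant term $1$. Write $C_G-C_H=\sum_k \delta_k w^k$. Multiplying by a power series with constant term $1$ preserves the lowest-order nonzero coefficient, both its index and its value. Hence the order of vanishing of the left-hand side equals that of $C_G-C_H$, and the leading coefficients agree. Example~\ref{ex:holygrail} gives a first discrepancy at order $w^6$ with difference $-28-(-30)=2$. Proposition~\ref{prop:c6} gives $c_6(G)-c_6(H)=2$ and $c_k(G)=c_k(H)$ for $k<6$, which is consistent with this.

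The only step that needs care is not algebraic. It is making sure the identity is read as an equality of formal power series, since $C_G$ is a rational function and not a polynomial, and noting that $C_G$ is orientation-independent. The latter follows because it is a ratio of two orientation-independent determinants. I expect no real obstacle. This is essentially Theorem~\ref{thm:resolution} specialized to the example, together with the numerical input from Proposition~\ref{prop:c6}.
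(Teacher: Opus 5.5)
Your proposal is correct and follows the same route as the paper. The line-graph factors coincide because $\operatorname{Spec}(L(G))=\operatorname{Spec}(L(H))$; you subtract the two instances of Theorem~\ref{thm:factor}; and you transfer the first discrepancy to $C_G-C_H$, which is Theorem~\ref{thm:resolution} specialized to this pair. Your observation that $\det(I-\tfrac w2 L)$ is a unit in $\mathbb Q[[w]]$ with constant term $1$ (so multiplying by it preserves both the index and the value of the lowest nonzero coefficient) makes explicit a step the paper leaves implicit, and it correctly reconciles $-28-(-30)=2$ with $c_6(G)-c_6(H)=2$.
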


\begin{remark}[Gauge dependence and mixed shadows]
The block formula in Theorem~\ref{thm:sector}
\[
Q^\top TQ=
\begin{pmatrix}
\frac12 L & -\frac12 |D|^\top D\\[2pt]
\frac12 D^\top |D| & -\frac12 S
\end{pmatrix}
\]
shows that the mixed block
\[
M:=|D|^\top D
\]
depends on the chosen reference orientation of the edges. Reversing the reference orientation of one edge multiplies the corresponding column of \(D\) by \(-1\), hence sends
\[
M \longmapsto M\Sigma
\]
for a diagonal sign matrix \(\Sigma\). Thus the raw spectrum of \(M\) is not a graph invariant.

The natural invariants of the mixed sector are therefore the gauge-invariant shadows built from \(M\), for example
\[
MM^\top,\qquad M^\top M,\qquad M^\top L^k M \quad (k\ge 0),
\]
whose spectra are unchanged under \(M\mapsto M\Sigma\). In particular, these are the mixed-sector observables that carry genuine graph-theoretic information, whereas the raw spectral data of \(M\) itself belongs only to a chosen orientation gauge.
\end{remark}

\begin{remark}[What the first separating coefficient sees]
For the pair in Example~\ref{ex:holygrail}, the first separation at order $6$ is not produced by the non-simple closed non-backtracking walk sector. The computational analysis shows that the discrepancy is concentrated entirely in the simple six-cycle sector. More precisely, one graph contributes a single six-vertex support carrying three primitive six-cycles of one local type, while the other contributes two heavily overlapping six-vertex support sets, each carrying three primitive six-cycles of the two parity-refined local types. Thus the first surviving correction coefficient is controlled by a mesoscopic overlap pattern of local support templates, rather than by an isolated gadget substitution.
\end{remark}

\begin{proposition}[Regular collapse of the primary mixed shadow]\label{prop:regularcollapse}
Let $G$ be a connected $k$-regular graph, and let $M:=|D|^\top D$ be the mixed block associated to any reference orientation. Then the nonzero spectrum of the gauge-invariant shadow $M^\top M$ is determined entirely by the adjacency spectrum of $G$. More precisely, the nonzero spectrum of $M^\top M$ agrees with the nonzero spectrum of
\[
(kI-A_G)(kI+A_G)=k^2I-A_G^2.
\]
Hence \(A\)-cospectral regular graphs have \(M^\top M\)-cospectral primary mixed shadows.

\begin{proof}
By definition,
\[
M^\top M = D^\top |D|\,|D|^\top D.
\]
The nonzero spectrum of \(D^\top |D|\,|D|^\top D\) agrees with the nonzero spectrum of
\[
(DD^\top)(|D|\,|D|^\top)
\]
by the standard cyclic invariance of nonzero eigenvalues. Now
\[
DD^\top=\Delta,\qquad |D|\,|D|^\top=Q,
\]
where \(\Delta\) is the ordinary Laplacian and \(Q\) is the signless Laplacian. Since \(G\) is \(k\)-regular,
\[
\Delta=kI-A_G,\qquad Q=kI+A_G.
\]
Therefore
\[
\Delta Q=(kI-A_G)(kI+A_G)=k^2I-A_G^2,
\]
which is determined entirely by the adjacency spectrum of \(G\). This proves the claim.
\end{proof}
\end{proposition}

\subsection*{Further small-census evidence}

The worked pair in Example~\ref{ex:holygrail} is not isolated. After Proposition~\ref{prop:regularcollapse}, which explains why the primary mixed shadow collapses back to adjacency data in the regular regime, the natural next question is whether this collapse fails for irregular graphs. A small-census screen among connected irregular graphs on nine vertices answers this decisively: it already yields pairs illustrating two logically distinct phenomena for the \emph{gauge-invariant mixed shadows}
\[
MM^\top,\qquad M^\top M,\qquad M^\top L M,\qquad M^\top L^2 M,
\]
where $M:=|D|^\top D$. Here, unless explicitly stated otherwise, ``equal'' means cospectral equality of the corresponding matrices. The following two explicit pairs are the decisive ones.

\medskip
\paragraph{Example A (same \(S\)-spectrum, different mixed shadows, different Hashimoto determinant).}
Let
\[
V(G_1)=V(H_1)=\{0,1,2,3,4,5,6,7,8\},
\]
with
\[
E(G_1)=\{\{0,5\},\{0,7\},\{1,6\},\{1,7\},\{1,8\},\{2,6\},\{2,8\},\{3,6\},\{3,8\},\{4,7\},\{5,8\},\{7,8\}\},
\]
and
\[
E(H_1)=\{\{0,5\},\{0,8\},\{1,5\},\{1,8\},\{2,6\},\{2,7\},\{2,8\},\{3,6\},\{3,7\},\{4,8\},\{5,7\},\{7,8\}\}.
\]
Both graphs have degree sequence
\[
(1,2,2,2,2,3,3,4,5).
\]
This pair is adjacency-cospectral and line-graph-cospectral, and also satisfies
\[
\operatorname{Spec}(S(G_1))=\operatorname{Spec}(S(H_1)).
\]
However, the gauge-invariant mixed shadows already separate:
\[
\chi_{MM^\top}(G_1)\neq \chi_{MM^\top}(H_1),
\qquad
\chi_{M^\top L M}(G_1)\neq \chi_{M^\top L M}(H_1),
\]
and similarly for \(M^\top M\) and \(M^\top L^2M\). The Hashimoto determinants also differ, with first separation at order \(w^8\):
\[
[w^8]\det(I-wT_{G_1})=16,\qquad [w^8]\det(I-wT_{H_1})=20.
\]
Thus the mixed-shadow package detects genuinely new irregular structure already at the matrix level, and in this example that structure survives into the scalar Ihara determinant.

For reproducibility, the corresponding graph6 labels are \texttt{H?ABePt} and \texttt{H?B@`jh}.

\medskip
\paragraph{Example B (different mixed shadows, same Hashimoto determinant).}
Let
\[
V(G_2)=V(H_2)=\{0,1,2,3,4,5,6,7,8\},
\]
with
\[
\begin{aligned}
E(G_2)=\{&
\{0,3\},\{0,4\},\{0,6\},\{0,7\},\{0,8\},\{1,4\},\{1,5\},\{1,6\},\{1,8\},\\
&\{2,5\},\{2,6\},\{2,7\},\{2,8\},\{3,6\},\{3,7\},\{4,7\},\{4,8\},\{5,8\}\},
\end{aligned}
\]
and
\[
\begin{aligned}
E(H_2)=\{&
\{0,3\},\{0,4\},\{0,5\},\{0,8\},\{1,4\},\{1,5\},\{1,6\},\{1,7\},\{1,8\},\\
&\{2,6\},\{2,7\},\{2,8\},\{3,5\},\{3,7\},\{3,8\},\{4,6\},\{4,8\},\{6,7\}\}.
\end{aligned}
\]

Both graphs have degree sequence
\[
(3,3,4,4,4,4,4,5,5).
\]
This pair is adjacency-cospectral, line-graph-cospectral, and antisymmetric-line-graph-cospectral:
\[
\operatorname{Spec}(A_{G_2})=\operatorname{Spec}(A_{H_2}),
\qquad
\operatorname{Spec}(L(G_2))=\operatorname{Spec}(L(H_2)),
\qquad
\operatorname{Spec}(S(G_2))=\operatorname{Spec}(S(H_2)).
\]
Nevertheless, the gauge-invariant mixed shadows separate:
\[
\chi_{MM^\top}(G_2)\neq \chi_{MM^\top}(H_2),
\qquad
\chi_{M^\top L M}(G_2)\neq \chi_{M^\top L M}(H_2),
\]
and similarly for \(M^\top M\) and \(M^\top L^2M\), while
\[
\det(I-wT_{G_2})=\det(I-wT_{H_2}).
\]
Thus the mixed-shadow package is strictly finer than the scalar Ihara determinant.

For reproducibility, the graph6 labels are \texttt{HCpfdrk} and \texttt{HCrRRfw}.

\medskip
These examples show that the correct invariant content of the mixed sector does not lie in the raw matrix \(M=|D|^\top D\), whose spectrum depends on the chosen orientation gauge, but in its gauge-invariant shadow package. In the irregular regime that package is already strictly finer than adjacency, line graph, and antisymmetric line graph, and in the control example above it is strictly finer than the scalar Ihara determinant itself.

\section{Conclusion}

The main point of the paper is that edge reversal is not merely a formal symmetry of the directed-edge state space: it is the canonical involution that resolves the Hashimoto operator into two graph-theoretically meaningful sectors. In the symmetric sector one recovers the ordinary line graph, in the antisymmetric sector the antisymmetric line graph, and the mixed term is the explicit transport block $|D|^\top D$ coupling the two.

This sector decomposition does more than reorganize notation. Through the Schur-complement formula it yields a refined factorization of the Ihara determinant into a line-graph factor and a correction factor, while the localization theorem shows that the trivial roots $w=\pm1$ live on canonical edge subspaces determined by the signed and unsigned incidence matrices. In the irregular regime this isolates a canonical correction package beyond the ordinary Bass-type collapse.

The worked witness in Example~\ref{ex:holygrail} shows that this correction package is not decorative. There the first Ihara divergence occurs at order $6$, after adjacency and line-graph spectra have already collapsed, and the separation is forced entirely into the correction sector. The subsequent support analysis identifies this first surviving coefficient with a parity-sensitive support-overlap mechanism in the simple six-cycle sector.

The mixed block itself is gauge-dependent: changing the reference orientation multiplies its columns by signs. The correct graph invariants are therefore not the raw spectral data of $|D|^\top D$, but the gauge-invariant mixed shadows built from it, such as
\[
MM^\top,\qquad M^\top M,\qquad M^\top L M,\qquad M^\top L^2 M.
\]
Our irregular examples show that these shadows already define genuinely new invariants beyond adjacency, line graph, and antisymmetric line graph. In the worked pair and in Example~A they separate exactly when the Hashimoto determinant separates; in Example~B they still separate although the scalar Ihara determinant does not. By contrast, Proposition~\ref{prop:regularcollapse} shows that in the regular regime the primary mixed shadow \(M^\top M\) collapses back to adjacency-side data, explaining why the regular benchmark of rook versus Shrikhande does not separate at this level.

A natural next step is to test this irregular edge-space package systematically against standard cospectral constructions, especially Godsil--McKay switching \cite{GodsilMcKay1982}, in order to determine how often mixed-shadow separation persists even when adjacency-side spectral data have been engineered to coincide.

Thus the factorization yields not only a new perspective on Ihara theory, but a new hierarchy of edge-space data: a gauge-dependent mixed transport representative, its gauge-invariant mixed-shadow package, and finally the scalar correction determinant as a still coarser shadow.

\vspace{2em}
\noindent\textbf{Author address:} \\
Hartosh Singh Bal \\
The Caravan, Jhandewalan Extn., New Delhi 110055, India \\
\texttt{hartoshbal@gmail.com}


\begin{thebibliography}{99}

\bibitem{Hashimoto1989}
K.~Hashimoto,
\newblock Zeta functions of finite graphs and representations of $p$-adic groups,
\newblock in \emph{Automorphic Forms and Geometry of Arithmetic Varieties}, Adv. Stud. Pure Math. \textbf{15} (1989), 211--280.

\bibitem{Bass1992}
H.~Bass,
\newblock The Ihara--Selberg zeta function of a tree lattice,
\newblock \emph{Internat. J. Math.} \textbf{3} (1992), 717--797.

\bibitem{Ihara1966}
Y.~Ihara,
\newblock On discrete subgroups of the two by two projective linear group over $p$-adic fields,
\newblock \emph{J. Math. Soc. Japan} \textbf{18} (1966), 219--235.

\bibitem{GodsilMcKay1982}
C.~D.~Godsil and B.~D.~McKay,
\newblock Constructing cospectral graphs,
\newblock \emph{Aequationes Math.} \textbf{25} (1982), 257--268.

\bibitem{BalHL2}
H. S. Bal,
\emph{Perfecting the Line Graph},
arXiv:2507.23231 [math.CO], 2025.

\bibitem{BalALG}
H. S. Bal,
\emph{The Antisymmetric Line Graph},
arXiv:2603.03087 [math.CO], 2026.

\bibitem{TerrasBook}
A.~Terras,
\newblock \emph{Zeta Functions of Graphs: A Stroll through the Garden},
\newblock Cambridge Studies in Advanced Mathematics, Cambridge University Press, 2010.

\bibitem{StarkTerras1996}
H.~M.~Stark and A.~A.~Terras,
\newblock Zeta functions of finite graphs and coverings,
\newblock \emph{Adv. Math.} \textbf{121} (1996), 124--165.

\bibitem{KotaniSunada2000}
M.~Kotani and T.~Sunada,
\newblock Zeta functions of finite graphs,
\newblock \emph{J. Math. Sci. Univ. Tokyo} \textbf{7} (2000), 7--25.

\bibitem{ZaslavskySGG}
T. Zaslavsky,
\emph{Signed Graphs and Geometry},
J. Combin. Inform. System Sci. \textbf{37} (2012), no.~2--4, 95--143.

\bibitem{ZaslavskyBiblio}
T. Zaslavsky,
\emph{A Mathematical Bibliography of Signed and Gain Graphs and Allied Areas},
Electron. J. Combin. Dynamic Surveys \textbf{DS8}, 1998; revised 2018.

\end{thebibliography}
\end{document}